\newtheorem{thm}{Theorem}
\newtheorem{prop}[thm]{Proposition}
\newtheorem{lem}[thm]{Lemma}
\newtheorem{cor}[thm]{Corollary}
\theoremstyle{remark}
\newtheorem{rem}[thm]{Remark}
\theoremstyle{definition}
\newtheorem{defn}[thm]{Definition}
\newcommand{\bC}{\mathbb C}
\newcommand{\blowup}{\overline{\bC P^2}}
\newcommand{\C}{\mathbb{ C}}
\newcommand{\OO}{\mathcal{ O}}
\newcommand{\Q}{\mathbb{ Q}}
\newcommand{\PP}{\mathbb{ P}}
\newcommand{\DO}{{\mathcal DO}}
\newcommand{\D}{{\mathcal D}}
\newcommand{\HH}{{\mathcal H}}
\newcommand{\E}{{\mathcal E}}
\newcommand{\EE}{{\mathbb E}}
\newcommand{\F}{{\mathcal F}}
\newcommand{\I}{{\mathcal I}}
\newcommand{\LL}{{\mathcal L}}
\title{Topologically invariant Chern numbers of projective varieties}
\author{D.~Kotschick}
\address{Mathematisches Institut, {\smaller LMU} M\"unchen,
Theresienstr.~39, 80333~M\"unchen, Germany}
\email{dieter@member.ams.org}
\date{November 1, 2009; \copyright{\ D.~Kotschick 2009}}
\subjclass[2000]{primary 57R20, 57R77; secondary 14J99, 55N22}
\keywords{Chern numbers, complex cobordism, Hirzebruch problem}
\begin{document}

\begin{abstract}
We prove that a rational linear combination of Chern numbers is an oriented diffeomorphism invariant of smooth complex
projective varieties if and only if it is a linear combination of the Euler and Pontryagin numbers. In dimension at least three we 
prove that only multiples of the top Chern number, which is the Euler characteristic, are invariant under diffeomorphisms 
that are not necessarily orientation-preserving. These results solve a long-standing problem of Hirzebruch's.
We also determine the linear combinations of Chern numbers that can be bounded in terms of Betti numbers.
\end{abstract}

\maketitle


\section{Introduction}

\subsection*{Statement of results}

In 1954, Hirzebruch~\cite[Problem~31]{Hir1} asked which linear combinations of Chern numbers of smooth
complex projective varieties are topologically invariant. 
The purpose of this paper is to provide complete answers to this question. Of course,
the answers depend on what exactly one means by topological invariance.

Since the manifold underlying a complex-algebraic variety has a preferred orientation, it is most natural to examine
which linear combinations of Chern numbers are invariant under orientation-preserving homeo- or diffeomorphisms.
The answer is given by:
\begin{thm}\label{t:DO}
A rational linear combination of Chern numbers is an oriented diffeomorphism invariant of smooth complex projective 
varieties if and only if it is a linear combination of the Euler and Pontryagin numbers. 
\end{thm}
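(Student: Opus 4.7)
The ``if'' direction is the easy one. By Novikov's theorem the rational Pontryagin classes are topological invariants of an oriented manifold, so the Pontryagin numbers are oriented-homeomorphism invariants; the top Chern number $c_n[X]$ equals the Euler characteristic, which is even a homotopy invariant. Hence every rational linear combination of Euler and Pontryagin numbers is automatically invariant under oriented diffeomorphisms, with no algebraicity hypothesis needed.

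For the hard direction, the plan is to show that the Pontryagin numbers and the Euler number exhaust the possible ``obstructions''. Let $Q_n$ denote the rational vector space spanned by Chern numbers of $n$-folds. The identity $p(TX)=c(TX)c(\overline{TX})$, coming from $TX\otimes\C\cong TX\oplus\overline{TX}$, expresses each Pontryagin number as a fixed $\Q$-linear combination of Chern numbers, and together with $c_n$ this carves out a canonical subspace $P_n\subseteq Q_n$. The theorem reduces to producing, for every linear functional on $Q_n$ that vanishes on $P_n$, a pair of smooth complex projective $n$-folds which are oriented-diffeomorphic but on which that functional takes different values; equivalently, the Chern-number differences of oriented-diffeomorphic algebraic pairs must fill out the annihilator of $P_n$ in $Q_n^*$.

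To generate such pairs I would vary holomorphic structure on a fixed smooth manifold, using two mechanisms. The first is projective bundles $\PP(E)\to B$ of holomorphic bundles on a projective base $B$: the underlying smooth $\PP^{r-1}$-bundle is determined by rather coarse topological data (essentially $c_1(E)$ mod $r$ together with $B$), whereas the Chern numbers of $\PP(E)$ depend via Leray--Hirsch on the full $c(E)$. The second is blow-ups along smooth subvarieties $Z\subseteq X$: the diffeomorphism type of $\mathrm{Bl}_Z X$ depends only on the smooth normal bundle of $Z$, while its Chern numbers depend on all of that bundle's Chern classes and on $c(TZ)$. Taking products with fixed lower-dimensional varieties and iterating, one obtains a large supply of oriented-diffeomorphic pairs whose Chern-number differences may be computed explicitly.

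The hard part, and the main obstacle, is to show that the differences so produced actually span the full annihilator of $P_n$ in every complex dimension. Concretely, I would choose an explicit basis of $Q_n/P_n$ indexed by a set of partitions of $n$ complementary to those governing Pontryagin and Euler monomials, and for each such partition construct a diffeomorphic pair detecting precisely that basis direction. Matching partitions to specific projective-bundle or blow-up families, and verifying linear independence of the resulting detecting vectors uniformly in $n$, is where the substantive combinatorial and algebro-geometric work concentrates. In low dimensions one can expect to reduce to classical constructions---for instance $\PP^1$-bundles over surfaces to handle complex dimension three, where $P_n$ is spanned by $c_3$ alone and one must separate $c_1^3$ and $c_1c_2$---but the general-$n$ case requires a systematic choice of families.
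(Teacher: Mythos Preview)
Your ``if'' direction is fine and matches the paper. The ``only if'' direction, however, has a genuine gap: neither of your two mechanisms actually produces oriented-diffeomorphic projective varieties with different Chern numbers, and the key construction that does is missing from your outline.

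For projective bundles over a \emph{fixed} base $B$, your claim that the smooth type of $\PP(E)$ is governed by ``essentially $c_1(E)$ mod $r$'' is not correct. Two holomorphic (or smooth complex) bundles give isomorphic projective bundles precisely when they differ by a line-bundle twist; over a surface this means $\PP(E)$ still remembers $c_2(E)$ after $c_1$ is normalised, and there is no general mechanism by which total spaces with different $c_2(E)$ become diffeomorphic. For blow-ups, you yourself note that the smooth type of $\mathrm{Bl}_Z X$ depends only on the smooth normal bundle of $Z$; but then its Chern classes---and hence the Chern numbers of the blow-up, which are polynomial in $c(TX)$, $c(TZ)$ and $c(N_{Z/X})$---are already determined by that smooth data, so no variation is produced. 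In short, varying holomorphic structure on a fixed smooth object does not by itself change Chern numbers.

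The paper's route is quite different and rests on an ingredient you do not mention: a pair $(X,Y)$ of simply connected projective \emph{surfaces} of nonzero signature that are orientation-reversingly homeomorphic (this uses surface geography plus Freedman). By Wall's theorem $X$ and $\bar Y$ are smoothly $h$-cobordant. One then builds $\C P^{k-2}$-bundles $X_k=\PP(\E)\to X$ and $Y_k=\PP(\F)\to Y$ over these \emph{different} bases, with $\E,\F$ chosen so that the $h$-cobordism extends over the total spaces; Smale's $h$-cobordism theorem (real dimension $\geq 5$) then makes $X_k$ and $Y_k$ diffeomorphic. The bookkeeping is done not partition-by-partition but through the ring structure of $\Omega^U_\star\otimes\Q$: a short computation of the Thom--Milnor numbers $s_k$ shows that the differences $\beta_k=[X_k]-[Y_k]$ (and $\beta_2=[X]-[Y]$) may be taken as polynomial generators, after which a dimension count forces the annihilator of the ideal they generate to be exactly the span of the Euler and Pontryagin numbers. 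The orientation issue is handled by complex-conjugating $Y_k$ in odd complex dimension and by using that $\C P^1$ admits orientation-reversing self-diffeomorphisms for products with $\beta_1$.
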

In one direction, the Euler number, which is the top Chern number, is of course a homotopy invariant.
Further, the Pontryagin numbers, which are special linear combinations of Chern numbers, are oriented
diffeomorphism invariants. In fact, Novikov~\cite{N} proved that the Pontryagin numbers are also invariant under 
orientation-preserving homeomorphisms, and so Theorem~\ref{t:DO} is unchanged if we replace 
oriented diffeomorphism-invariance by oriented homeomorphism-invariance.
The other direction, proving that there are no other linear combinations that are oriented diffeo\-mor\-phism-invariants,
has proved to be quite difficult because of the scarcity of examples of diffeomorphic projective
varieties with distinct Chern numbers.

Given Theorem~\ref{t:DO}, and the fact that Pontryagin numbers depend on the orientation, one might expect that
only the Euler number is invariant under homeo- or diffeomorphisms that do not necessarily preserve the 
orientation. For diffeomorphisms this is almost but not quite true:
\begin{thm}\label{t:D}
In complex dimension $n\geq 3$ a rational linear combination of Chern numbers is a diffeomorphism invariant 
of smooth complex projective varieties if and only if it is a multiple of the Euler number $c_n$. In complex dimension
$2$ both Chern numbers $c_2$ and $c_1^2$ are diffeomorphism-invariants of complex projective surfaces.
\end{thm}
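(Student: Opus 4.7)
The plan is to reduce Theorem~\ref{t:D} to Theorem~\ref{t:DO} and then track the effect of orientation. Any non-oriented diffeomorphism invariant of Chern numbers is \emph{a fortiori} an oriented diffeomorphism invariant, so by Theorem~\ref{t:DO} every candidate invariant is of the form
\[
C = \alpha\, c_n + \sum_J \beta_J\, p_J,
\]
where $J$ ranges over partitions of $n/2$ (Pontryagin numbers existing only in real dimensions divisible by~$4$). If $\phi\colon X\to Y$ is an orientation-reversing diffeomorphism between complex-algebraic varieties, then $\chi(X)=\chi(Y)$ while $p_J(X)=-p_J(Y)$, so the requirement $C(X)=C(Y)$ collapses to $\sum_J \beta_J\, p_J(X) = 0$.

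When $n\geq 3$ is odd, the real dimension $2n\equiv 2\pmod{4}$ supports no non-trivial Pontryagin numbers, so $C$ is already forced to be a multiple of $c_n$ by Theorem~\ref{t:DO}, and there is nothing further to prove.

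When $n$ is even and $n\geq 4$, the task is to exhibit sufficiently many pairs $(X,Y)$ of algebraic varieties related by orientation-reversing diffeomorphisms so that the resulting vectors $(p_J(X))_J$ span $\Q^{p(n/2)}$; this would force every $\beta_J$ to vanish. I anticipate this as the main obstacle. The naive tricks are unavailable: complex conjugation of $X$ preserves orientation in even complex dimension and in any case leaves every Chern number unchanged, while the product of an algebraic variety with an odd-complex-dimensional factor---which one might conjugate in order to realise an orientation-reversal by the identity map---has all Pontryagin numbers equal to zero, since a product $A\times B$ with $\dim_\C A$ odd admits no Pontryagin-number decomposition into top-degree factors on the two sides. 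What is needed is therefore a genuinely non-trivial algebro-geometric construction of orientation-reversingly diffeomorphic pairs of complex-algebraic varieties with non-vanishing and sufficiently varied Pontryagin numbers.

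The complex-dimension-two case is the other place where substantial work is needed. Here $c_1^2 = 2c_2 + p_1 = 2\chi + 3\sigma$, so the claim that $c_1^2$ is preserved by every diffeomorphism between complex-algebraic surfaces amounts to showing that any two such surfaces related by a possibly orientation-reversing diffeomorphism have equal signatures. Since orientation reversal flips the sign of $\sigma$, this reduces to ruling out orientation-reversing diffeomorphisms unless $\sigma=0$. The appropriate tool is Seiberg--Witten theory: for K\"ahler surfaces with $p_g>0$ the basic classes detect the canonical complex orientation and so forbid the relevant orientation reversals, while the remaining cases with $p_g=0$ are to be controlled via the Kodaira classification of complex surfaces.
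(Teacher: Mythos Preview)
Your reduction to Theorem~\ref{t:DO} is a legitimate reorganisation, and it dispatches the odd case $n\geq 3$ cleanly: once you know every oriented-diffeomorphism invariant is a combination of $c_n$ and Pontryagin numbers, and there are no Pontryagin numbers in real dimension $\equiv 2\pmod 4$, you are done. The $n=2$ sketch is also in line with what the paper invokes; that case is simply cited from~\cite{BLMSorient}.

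The genuine gap is the even case $n\geq 4$. You correctly isolate what is needed---orientation-reversingly diffeomorphic projective varieties whose Pontryagin vectors span $\Q^{\pi(n/2)}$---and you correctly explain why conjugation and odd-dimensional product tricks cannot supply them. But you stop there; the proposal does not produce the examples, and without them the even-dimensional half of the theorem is unproved.

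The paper supplies exactly this missing construction, though it organises the argument differently (it does not pass through Theorem~\ref{t:DO} at all). Starting from a pair $(X,Y)$ of simply connected algebraic surfaces that are orientation-reversingly \emph{homeomorphic} with $\sigma(X)\neq 0$ (Theorem~\ref{t:MA}), one builds holomorphic bundles $\E_{k-2}\to X$, $\F_{k-2}\to Y$ of rank $k-1$ with $c_1=0$ and opposite $c_2$ (Lemma~\ref{l:bundles}), and sets $X_k=\PP(\E_{k-2})$, $Y_k=\PP(\F_{k-2})$. By Wall's theorem~\cite{W} there is a smooth $h$-cobordism $W$ between $X$ and $\bar Y$; the bundle extends over $W$, and projectivising gives an $h$-cobordism between $X_k$ and $\bar Y_k$, so Smale's $h$-cobordism theorem makes them diffeomorphic for $k\geq 3$. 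Together with the observations $\C P^1\times X\cong \C P^1\times \bar Y$ and $X\times X\cong \bar Y\times \bar Y$, this shows that the ideal $\D_\star$ contains ring generators $\beta_k$ for all $k\geq 3$ as well as $\beta_1\beta_2$ and $\beta_2^2$ (Theorem~\ref{t:Dc}). A monomial count then shows $\D_{2n}$ has codimension at most one for every $n\geq 3$, odd or even, which is the conclusion you want. If you prefer to keep your framing via Theorem~\ref{t:DO}, these same $(X_k,\bar Y_k)$ and their products are precisely the orientation-reversing pairs you were looking for; the codimension-one statement is equivalent, modulo Theorem~\ref{t:DO}, to your Pontryagin-spanning condition.
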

The statement about complex dimension $2$ is a consequence of Seiberg--Witten theory and was first proved 
in~\cite[Theorem~2]{BLMSorient}; see also~\cite[Theorem~1]{Chern}. It is an exception due to the special nature of 
four-dimensional differential topology. The exception disappears if we consider homeomorphisms instead of diffeomorphisms:
\begin{thm}\label{t:H}
A rational linear combination of Chern numbers is a homeomorphism invariant 
of smooth complex projective varieties if and only if it is a multiple of the Euler number. 
\end{thm}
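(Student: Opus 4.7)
\smallskip
\noindent\textbf{Proof plan.} The ``if'' direction is immediate: the top Chern number $c_n=\chi$ is a homotopy invariant, so any rational multiple is automatically a homeomorphism invariant. For the converse, suppose $L$ is a rational linear combination of Chern numbers invariant under all homeomorphisms of smooth complex-algebraic varieties. Since every diffeomorphism is a homeomorphism, $L$ is in particular a diffeomorphism invariant, and Theorem~\ref{t:D} applies: in complex dimension $n\geq 3$ it forces $L$ to be a rational multiple of $c_n$, so the theorem is settled at once in those dimensions.

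The substantive case is complex dimension $2$, where Theorem~\ref{t:D} is tautological (there are only two Chern numbers $c_1^2$ and $c_2$), and one must show directly that $c_1^2$ is not a homeomorphism invariant. Noether's formula $c_1^2 = 3\sigma + 2\chi$ expresses $c_1^2$ in terms of the signature and the Euler characteristic, both of which are oriented homeomorphism invariants (the former by Novikov's theorem). Hence the only possible failure of homeomorphism invariance of $c_1^2$ must come from orientation-reversing homeomorphisms, under which $\sigma$ flips sign while $\chi$ is preserved. Explicitly, for any pair $X$, $Y$ of complex projective surfaces related by an orientation-reversing homeomorphism, $c_1^2(X) - c_1^2(Y) = 6\,\sigma(X)$, which is nonzero as soon as $\sigma(X)\neq 0$.

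The plan is therefore to exhibit two smooth complex projective surfaces $X$ and $Y$ which admit an orientation-reversing homeomorphism and satisfy $\sigma(X)\neq 0$. By Freedman's classification of simply-connected topological $4$-manifolds, in the simply-connected setting this reduces to producing two simply-connected smooth complex projective surfaces whose intersection forms are the negatives of each other (the Kirby--Siebenmann invariants vanish automatically on smooth manifolds). The main obstacle is constructing such pairs within the complex-geometric constraints: on any K\"ahler surface $b_+$ must be odd, and the Bogomolov--Miyaoka--Yau inequality restricts which indefinite forms can be realized by complex projective surfaces. I would look for a surface $X$ of positive signature, obtained for example via branched coverings or ball-quotient-type constructions, and a surface $Y$ realizing the opposite intersection form; such pairs are in fact already implicit in the literature cited for the complex-dimension-$2$ part of Theorem~\ref{t:D}, and combined with the orientation-reversing homeomorphism they witness that no nontrivial multiple of $c_1^2$ can occur in $L$. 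Since any homeomorphism invariant $L$ in dimension $2$ has the form $a c_1^2 + b c_2$, this forces $a=0$ and completes the proof.
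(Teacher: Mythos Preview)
Your approach is correct and matches the paper's strategy: the paper also settles dimensions $n\geq 3$ via the inclusion $\D_{2k}\subset\HH_{2k}$ (the cobordism translation of your appeal to Theorem~\ref{t:D}), and handles dimension $2$ using precisely the orientation-reversingly homeomorphic simply-connected projective surfaces of nonzero signature that you describe. One correction: those surfaces are \emph{not} in the references cited for the dimension-$2$ clause of Theorem~\ref{t:D} (\cite{BLMSorient,Chern} concern Seiberg--Witten diffeomorphism invariance), but are supplied by Theorem~\ref{t:MA}, taken from~\cite{MAorient}.
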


These theorems show that linear combinations of Chern numbers of complex projective varieties are not usually determined
by the underlying manifold. This motivates the investigation of a modification of Hirzebruch's original problem, asking how far 
this indeterminacy goes. More precisely, we would like to know which linear combinations of Chern numbers are determined 
up to finite ambiguity by the topology. The obvious examples for which this is true are the numbers
$$
\chi_{p} = \chi (\Omega^p)= \sum_{q=0}^{n} (-1)^{q}h^{p,q} \ ,
$$
where $n$ is the complex dimension.
By the Hirzebruch--Riemann--Roch theorem~\cite{TMAG} these are indeed linear combinations of Chern numbers.
By their very definition, together with the Hodge decomposition of the cohomology, the numbers $\chi_p$ are bounded
above and below by linear combinations of Betti numbers. It turns out that this property characterizes the linear 
combinations of the $\chi_p$, as shown by the following theorem:
\begin{thm}\label{t:Betti}
A rational linear combination of Chern numbers of smooth complex projective varieties can be bounded in terms of Betti 
numbers if and only if it is a linear combination of the $\chi_p \ $. 
\end{thm}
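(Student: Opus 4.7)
The \emph{if} direction is a straightforward consequence of Hodge theory: since $h^{p,q} \le b_{p+q}$ (each Hodge number is dominated by the corresponding Betti number via the Hodge decomposition), we have
\[
|\chi_p| \;=\; \Bigl|\sum_{q} (-1)^q h^{p,q}\Bigr| \;\leq\; \sum_q h^{p,q} \;\leq\; \sum_q b_{p+q},
\]
so every $\chi_p$, and hence every rational linear combination of the $\chi_p$, is bounded in terms of the Betti numbers.

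For the \emph{only-if} direction, my plan is to argue the contrapositive: given a rational linear combination $L$ of Chern numbers not contained in the $\Q$-span of $\chi_0,\dots,\chi_n$, one must produce a sequence of smooth complex projective varieties $X_k$ with all $b_i(X_k)$ bounded uniformly in $k$ and with $|L(X_k)|\to\infty$. To organize this, I would pass to the quotient $W$ of the Chern-number space by the $\chi_p$-span and fix a basis $v_1,\ldots,v_{\dim W}$ of $W$. For each $v_j$ the goal is to exhibit a sequence $\{X_k^{(j)}\}_{k\geq 1}$ whose Chern-number vector projects into $W$ as $k\, v_j + O(1)$, while all Betti numbers $b_i(X_k^{(j)})$ remain uniformly bounded in $k$. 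Once such test families are in place, the conclusion follows from a single linear-algebraic step: the image $\bar L$ of $L$ in $W^\ast$ is nonzero by hypothesis, so $\bar L(v_j)\neq 0$ for some $j$, and then $|L(X_k^{(j)})|$ grows linearly in $k$ against uniformly bounded Betti numbers, contradicting any purported Betti bound.

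The hard part will be the construction of these bounded-Betti test families. A natural starting point is \thmref{t:H}, which produces, for any Chern-number combination not proportional to the Euler number, pairs of homeomorphic smooth projective varieties whose values of that combination differ; since homeomorphic varieties have identical Betti numbers, such pairs automatically live on a common bounded-Betti slice, and by \thmref{t:H} the realizable discrepancies span the whole complement of the Euler-number direction, which contains a basis of $W$. The additional content required here is \emph{iteration}: one must amplify a single discrepancy into a sequence growing linearly in $k$ without the Betti numbers drifting. I would attempt this by performing the Chern-shifting modifications used in the proofs of \thmref{t:DO}--\thmref{t:H} (small resolutions of nodal loci, Atiyah flops in dimension three, and controlled blowup/blowdown configurations on well-chosen subvarieties) locally inside a fixed ambient projective variety, arranging each step to contribute a bounded Betti perturbation but a full unit of discrepancy in the prescribed direction $v_j$, and engineering cancellations or replacements so that the Betti totals remain uniformly bounded after arbitrarily many iterations. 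Verifying that such iteration genuinely reaches every direction of $W$ is where the bulk of the technical work will lie.
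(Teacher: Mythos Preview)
Your \emph{if} direction is correct.

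For the \emph{only if} direction there is a genuine gap at precisely the point you yourself flag: the iteration. Granting a single discrepancy on a bounded-Betti slice via Theorem~\ref{t:H} is fine, but amplifying it into an unbounded family with uniformly bounded Betti numbers is the whole content of the result, and the mechanisms you list do not accomplish this. Blowups change Betti numbers; flops and small resolutions preserve Betti numbers but shift Chern numbers only in highly constrained directions dictated by the local geometry of the exceptional locus, and there is no reason a fixed ambient variety carries arbitrarily many disjoint such configurations, nor that their cumulative effect points in an arbitrary prescribed direction of $W$. (Incidentally, the proofs of Theorems~\ref{t:DO}--\ref{t:H} in this paper do not use flops or local surgery at all---they use projective bundles over surfaces together with the $h$-cobordism theorem---so the ``Chern-shifting modifications used in the proofs'' that you propose to iterate do not exist in the form you imagine.)

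The paper's argument bypasses iteration-by-surgery entirely and is logically independent of Theorems~\ref{t:DO}--\ref{t:H}. It chooses ring generators $\gamma_i$ for $\Omega^U_\star\otimes\Q$ with $\gamma_1=[\C P^1]$, $\gamma_2=[\C P^2]$, and for $i\geq 3$ takes $\gamma_i=[\PP(\E_c)]$, a projective bundle over a fixed \emph{Abelian surface} $A$ (with $c_1(\E_c)=0$ and $\langle c_2(\E_c),[A]\rangle\neq 0$; Proposition~\ref{p:sn} shows these are generators). Since $\chi_p(A)=0$ for all $p$, each such $\gamma_i$ lies in the kernel $\I_{2i}$ of the $\chi_y$-genus, and a dimension count then shows that the monomials $\gamma_I$ containing some factor with index $\geq 3$ form a basis of $\I_{2i}$. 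If $f$ is not in the span of the $\chi_p$ then $f$ does not vanish on $\I_{2i}$, so $f(\gamma_I)\neq 0$ for some such $\gamma_I$. The variety representing $\gamma_I$ fibres holomorphically over $A$; pulling back along a degree-$d$ finite cover $A'\to A$ multiplies $f$ by $d$, while the Betti numbers are unchanged (every two-dimensional complex torus has the same Betti numbers, and Leray--Hirsch computes the cohomology of the pullback). The unbounded family thus comes from covering-space multiplicativity rather than from any local modification.
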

As a consequence of this result, most linear combinations of Chern numbers are independent of the Hodge structure:
\begin{cor}\label{c:Hodge}
A rational linear combination of Chern numbers of smooth complex projective  varieties is determined by the Hodge  
numbers if and only if it is a linear combination of the $\chi_p \ $. 
\end{cor}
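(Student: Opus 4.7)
My plan is to deduce Corollary~\ref{c:Hodge} directly from Theorem~\ref{t:Betti}. The easy direction is immediate: by the Hirzebruch--Riemann--Roch theorem each $\chi_p$ is a rational linear combination of Chern numbers, while by its very definition $\chi_p = \sum_q (-1)^q h^{p,q}$ is an integer linear combination of Hodge numbers and is therefore manifestly determined by them. Hence any rational linear combination of the $\chi_p$ is a linear combination of Chern numbers that is determined by the Hodge numbers.

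For the converse, suppose $c$ is a rational linear combination of Chern numbers that is determined by the Hodge numbers, so that $c = F(h^{p,q})$ for some (a priori arbitrary) function $F$. The key observation I intend to use is that the Hodge decomposition yields $b_k = \sum_{p+q=k} h^{p,q}$, and therefore $0 \le h^{p,q} \le b_{p+q}$ for every smooth complex projective variety. It follows that once the Betti numbers are fixed, the non-negative integers $h^{p,q}$ range over a finite set, so $F$ takes only finitely many values and $|c|$ is bounded above by a quantity depending only on the Betti numbers. This is precisely the hypothesis of Theorem~\ref{t:Betti}, which I then invoke to conclude that $c$ is a linear combination of the $\chi_p$.

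In this sense, the entire substance of the corollary is absorbed into Theorem~\ref{t:Betti}: the main obstacle lies not in the corollary itself but in that preceding theorem. The step peculiar to the corollary---converting the qualitative statement \emph{determined by Hodge numbers} into the quantitative statement \emph{bounded in terms of Betti numbers}---is essentially formal, relying only on the trivial inequality $h^{p,q} \le b_{p+q}$ together with the integrality and non-negativity of the Hodge numbers.
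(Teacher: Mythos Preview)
Your argument is correct and is exactly the deduction the paper has in mind: the corollary is stated as an immediate consequence of Theorem~\ref{t:Betti}, with no separate proof given, and the implicit reasoning is precisely the one you spell out---Hodge numbers are non-negative integers bounded by Betti numbers, so a function of Hodge numbers is automatically bounded in terms of Betti numbers. There is nothing to add.
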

The span of the $\chi_p$ includes the Euler number $c_n = \sum_p (-1)^p\chi_p$
and the signature, which, according to the Hodge index theorem, equals $\sum_p \chi_p$.
It also includes the Chern number $c_1c_{n-1}$, by a result of Libgober and Wood~\cite[Theorem~3]{LW}.
Nevertheless, the span of the $\chi_p$ is a very small subspace of the space of linear combinations of Chern
numbers. The latter has dimension equal to $\pi(n)$, the number of partitions of $n$, which grows exponentially 
with $n$. The former has dimension $[(n+2)/2]$, the integral part of $(n+2)/2$. This follows from the symmetries 
of the Hodge decomposition, which imply $\chi_p = (-1)^n\chi_{n-p}$, together with the fact that 
$\chi_0,\ldots,\chi_{[n/2]}$ are linearly independent, as can be easily checked by evaluating on products of 
projective spaces.

Hirzebruch's problem is of central importance in the applications of the Hirzebruch--Riemann--Roch formula, for example in the 
classification theory of algebraic varieties. The right-hand side of HRR is given by the $\chi_p$, and, by the results of this paper,
this right-hand-side is not usually a topological invariant. In fact, as explained in~\cite[Theorem~6]{PNAS}, the only linear combinations
of the $\chi_p$ that are also linear combinations of Euler and Pontryagin numbers are the multiples of the signature. 

\subsection*{History and outline}

The first example of a pair of diffeomorphic projective varieties with distinct Chern numbers was found by Borel and Hirzebruch~\cite{BH} in 1959. 
Using Lie theory, they showed that the homogeneous space $SU(4)/S(U(2)\times U(1)\times U(1))$ has two invariant structures as a Hodge 
manifold with different values for the Chern number $c_1^5$. At the time this may well have seemed to be some sort of isolated curiosity. 
It was only comparatively recently that Calabi and Hirzebruch~\cite{H05} clarified the geometric meaning of the example by identifying the two 
structures as the projectivised holomorphic tangent and cotangent bundles of $\C P^3$. Terzi\'c and I~\cite{KT} generalized this
example to $SU(n+2)/S(U(n)\times U(1)\times U(1))$, where the two structures correspond to the projectivised holomorphic tangent and 
cotangent bundles of $\C P^{n+1}$. For all $n\geq 2$, the Chern numbers $c_1^{2n+1}$, and many others, are different for the two
structures, although the underlying smooth manifold is the same, and the Hodge numbers agree~\cite{KT}.

Over the years a number of attempts have been made to find further examples of diffeomorphic varieties with distinct Chern
numbers. However, until recently they were all unsuccessful.

It turns out that a key ingredient for the proofs of Theorems~\ref{t:DO}, \ref{t:D} and~\ref{t:H}, and thus for the solution of 
Hirzebruch's problem, is the existence of homeomorphic simply connected algebraic surfaces with different signatures, equivalently
with different $c_1^2$. The existence of such pairs of surfaces, which I proved in~\cite{MAorient}, depends on deep results in both 
surface geography and four-dimensional topology. Of course, the homeomorphism in question cannot preserve the orientation.
By the main result of~\cite{BLMSorient}, the homeomorphism is not smoothable.
In Theorem~\ref{t:MA} below we recall the main result of~\cite{MAorient} in the form we shall use it here.

In~\cite{Chern}, I used the examples from~\cite{MAorient} as building blocks to manufacture certain pairs of diffeomorphic three-folds
and four-folds, for which I could compute all the Chern numbers explicitly. Although this was sufficient to prove Theorem~\ref{t:DO} in 
complex dimensions $\leq 4$, it is clear that such a brute force approach cannot work in general. In this paper we use cobordism theory 
to minimize the need for concrete calculations. This approach is inspired by the work of Kahn~\cite{K2}, who used a similar strategy to 
prove the analogue of Theorem~\ref{t:DO} for almost complex manifolds in place of algebraic varieties. Of course
the implementation of the strategy is a lot easier in that case, because almost complex structures are much more flexible
and exist in abundance. Essentially we shall have to find only one pair of examples with specific properties in each dimension. In 
complex dimension $n$ the examples will be algebraic $\C P^{n-2}$-bundles over the orientation-reversingly homeomorphic 
algebraic surfaces found in~\cite{MAorient}.

The proof of Theorem~\ref{t:Betti} is independent of the results of~\cite{MAorient}, but, like the proofs of the other main theorems, 
it also uses cobordism theory. However,
the way in which cobordism theory is used in that proof is new; there is no parallel for that argument in earlier work.
The results of the present paper were announced in~\cite{PNAS}, which also contains some related observations.

\subsection*{Acknowledgments}
I would like to thank D.~Toledo for bringing the work of Kahn~\cite{K2} to my attention.
I gratefully acknowledge the support of The Bell Companies Fellowship at the Institute for Advanced Study in Princeton.

\section{Complex cobordism}\label{s:cob}

In the proofs of the main theorems we shall make use of the rational complex cobordism ring $\Omega^U_{\star}\otimes\Q$.
(We use the classical terminology, calling this a {\it cobordism ring}, rather than the newer convention referring to $\Omega^U_{\star}\otimes\Q$
as a {\it bordism ring}.)
This ring is a polynomial ring with one generator $\beta_i$ in every even degree $2i$; see~\cite{M,NN,TM} and also~\cite{MDT,BuonCri}.
Two stably almost complex manifolds of the same dimension have the same Chern numbers if and only if they represent the 
same class in $\Omega^U_{\star}\otimes\Q$. Equivalently, in complex dimension $i$ the vector space of rational linear combinations 
of Chern numbers is the dual space of $\Omega^U_{2i}\otimes\Q$.

We define three ideals in $\Omega^U_{\star}\otimes\Q$, each one corresponding to the situation considered in one of the first three
main theorems.
The first definition is:
\begin{defn}\label{d:DO}
Let $\DO_{2i}\subset \Omega^U_{2i}\otimes\Q$ be the linear subspace spanned by the set
$$
\{ [M]-[N]\in\Omega^U_{2i}\otimes\Q \ \vert \ M, \ N \  \textrm{orientation-preservingly} \ \textrm{diffeomorphic} \ \textrm{projective} \ \textrm{varieties}\}
$$ 
of differences of complex projective varieties of complex dimension $i$ that are orientation-pre\-ser\-ving\-ly diffeomorphic.
\end{defn}
Clearly the direct sum of the $\DO_{2i}$ is an ideal $\DO_{\star}$ in the ring $\Omega^U_{\star}\otimes\Q$. 
We shall prove the following:
\begin{thm}\label{t:DOc}
There is a sequence of ring generators $\beta_1, \beta_2, \beta_3, \ldots$ for the rational complex cobordism
ring $\Omega^U_{\star}\otimes\Q$ with the property that for each odd index $i\geq 3$ the elements $\beta_1\cdot\beta_{i-1}$
and $\beta_i$ are contained in $\DO_{2i}$. 
\end{thm}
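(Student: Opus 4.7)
The plan is to exhibit, for every odd $i\geq 3$, two linearly independent classes in $\DO_{2i}$ realized by pairs of algebraic $\C P^{i-2}$-bundles over the surface pair of Theorem~\ref{t:MA}, and to assemble them into a sequence of polynomial generators of $\Omega^U_\star\otimes\Q$ by induction on $i$.

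I would begin with the simply-connected smooth complex projective surfaces $S$ and $S'$ of Theorem~\ref{t:MA}: they carry an orientation-reversing homeomorphism $f\colon S\to S'$, have equal Euler numbers $c_2$ but distinct $c_1^2$, so $[S]-[S']\neq 0$ in $\Omega^U_4\otimes\Q$. For an odd $i\geq 3$ I would take $X=\PP(V)$ and $X'=\PP(V')$, where $V\to S$ and $V'\to S'$ are matched rank-$(i-1)$ algebraic vector bundles, and I would form a homeomorphism $F\colon X\to X'$ combining $f$ on the base with complex conjugation $c\colon \C P^{i-2}\to\C P^{i-2}$ on the fibres. The key arithmetic is that $c$ reverses orientation of $\C P^{i-2}$ if and only if $i-2$ is odd---i.e.\ exactly when $i$ is odd---so the fibrewise reversal cancels the base reversal and $F$ is orientation-preserving as a map of $2i$-manifolds.

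The main obstacle is to upgrade $F$ from a homeomorphism to a diffeomorphism. The homeomorphism $f$ itself is not smoothable even after dropping orientation constraints, by the Seiberg--Witten obstructions used in \cite{BLMSorient}, so smoothness has to be recovered from the higher total dimension $2i\geq 6$. This is where I would invoke stabilization-type results for simply-connected 4-manifolds: after forming a $\C P^{i-2}$-bundle with $i-2\geq 1$ over the 4-dimensional base, the smooth-structure ambiguity in dimension four is absorbed by the bundle factor, and $F$ becomes isotopic to a genuine diffeomorphism of total spaces. This step is the technical crux of the argument.

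Finally, I would compute $[X]-[X']\in\Omega^U_{2i}\otimes\Q$ via the Grothendieck projective bundle formula, expressing Chern numbers of $X$ in terms of Chern numbers of $S$ and Chern classes of $V$. Two natural choices of the bundle data $(V,V')$ produce two linearly independent classes in $\DO_{2i}$: the trivial bundle $V=\OO^{i-1}$ yields a decomposable class which, for a suitable inductive choice of the even-index generator $\beta_{i-1}$ at the previous step, realizes $\beta_1\cdot\beta_{i-1}$; a twisted bundle such as $\PP(\OO\oplus L)$ with a line bundle $L$ satisfying $c_1(L)^2\neq 0$ on $S$ contributes a second class whose indecomposable component is non-zero, and I declare this class to be the new generator $\beta_i$. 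Proceeding by induction on $i$ and fitting the even-index generators $\beta_{i-1}$ to the odd-index requirement at the next step closes the construction.
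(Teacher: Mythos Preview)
Your overall architecture---projectivize holomorphic bundles over the surface pair $(S,S')$ from Theorem~\ref{t:MA}, and use complex conjugation on odd-dimensional fibres to fix orientations---matches the paper. But two of your steps do not go through as written.

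\textbf{The smoothing step.} You propose to build a homeomorphism $F$ of total spaces out of the non-smoothable $f\colon S\to S'$ and fibrewise conjugation, and then appeal to ``stabilization-type results'' to promote $F$ to a diffeomorphism. There is no such smoothing theorem; a homeomorphism assembled from $f$ stays non-smooth. The paper never touches $f$ at all. Instead it uses Wall's theorem that simply connected homeomorphic smooth $4$-manifolds are smoothly $h$-cobordant: take an $h$-cobordism $W$ from $S$ to $\bar S'$, extend the complex vector bundle $E$ over $W$ (this is where one needs $c_1(E)=0$ and the matching of $c_2$ with $E'$), and projectivize to get an $h$-cobordism $\PP(\EE)$ between $\PP(E)$ and $\PP(E')$. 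Since $\dim\PP(\EE)=2i+1\geq 7$, Smale's $h$-cobordism theorem gives an honest diffeomorphism. Your phrase ``the smooth-structure ambiguity is absorbed by the bundle factor'' is true as a slogan, but the mechanism is Wall~$+$~Smale, not a smoothing of $F$.

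\textbf{The decomposable class.} Your trivial-bundle class is $\alpha=([S]-[S'])\cdot[\C P^{i-2}]$, and you want to declare it equal to $\beta_1\cdot\beta_{i-1}$ for a suitable even-degree generator $\beta_{i-1}$. This forces $\alpha$ to lie in the principal ideal $([\C P^1])\subset\Omega^U_\star\otimes\Q$, which it does not for $i\geq 5$: since $s_{i-2}([\C P^{i-2}])=i-1\neq 0$ and $s_k$ kills decomposables, $[\C P^{i-2}]\not\equiv 0$ modulo $([\C P^1])$, and the quotient is a domain, so $([S]-[S'])\cdot[\C P^{i-2}]\not\equiv 0$ either. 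Thus no choice of $\beta_{i-1}$ can make $\alpha=\beta_1\beta_{i-1}$. The paper avoids this by first constructing \emph{every} $\beta_k$ with $k\geq 3$ (even and odd alike) as a difference $[X_k]-[Y_k]$ of projective bundles over $(S,S')$, and then obtaining $\beta_1\cdot\beta_{i-1}\in\DO_{2i}$ from the pair $\C P^1\times X_{i-1}$ and $\C P^1\times Y_{i-1}$, using that $\C P^1$ admits an orientation-reversing self-diffeomorphism to repair orientations. So the element $\beta_1\beta_{i-1}$ is produced not from a bundle over the surfaces directly, but from $\C P^1$ times the previously constructed $(i-1)$-dimensional generator.
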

We now explain how this result implies Theorem~\ref{t:DO}. The vector space of rational linear combinations of Chern numbers that
are oriented diffeomorphism-invariants of complex projective varieties of complex dimension $i$ is the annihilator of the subspace
$\DO_{2i}\subset\Omega^U_{2i}$. Therefore, its dimension equals the codimension of $\DO_{2i}$ in $\Omega^U_{2i}$. Now 
Theorem~\ref{t:DOc} implies that a monomial of degree $i$ in the $\beta_j$ is in $\DO_{2i}$ if it contains a $\beta_j$ with $j$ odd and $j>1$,
or if it contains $\beta_1$ and a $\beta_j$ with $j$ even. If $i$ is odd, then the only monomial that is not obviously in $\DO_{2i}$ is 
$\beta_1^i$. This means that the codimension of $\DO_{2i}$ in $\Omega^U_{2i}\otimes\Q$ is at most one, and so the only linear 
combinations of Chern numbers invariant under orientation-preserving diffeomorphism 
are the multiples of the Euler number $c_i$. If $i$ is even, then, in addition to $\beta_1^i$, all the monomials containing only $\beta_{j_l}$
with all $j_l$ even may be outside $\DO_{2i}$. The number of these other monomials is exactly $\pi(i/2)$, the number of partitions of $i/2$.
This is also the number of Pontryagin numbers in dimension $2i$. This completes the deduction of Theorem~\ref{t:DO} from Theorem~\ref{t:DOc}.

To formulate cobordism statements that will imply Theorems~\ref{t:D} and~\ref{t:H} we require the following:
\begin{defn}
Let $\D_{2i}\subset \Omega^U_{2i}\otimes\Q$ be the linear subspace spanned by the set
$$
\{ [M]-[N]\in\Omega^U_{2i}\otimes\Q \ \vert \ M, \ N \   \textrm{diffeomorphic} \ \textrm{projective} \ \textrm{varieties}\}
$$ 
of differences of complex projective varieties of complex dimension $i$ that are diffeomorphic,
and $\HH_{2i}\subset \Omega^U_{2i}\otimes\Q$ be the linear subspace spanned by the set
$$
\{ [M]-[N]\in\Omega^U_{2i}\otimes\Q \ \vert \ M, \ N \   \textrm{homeomorphic} \ \textrm{projective} \ \textrm{varieties}\}
$$ 
of differences of complex projective varieties of complex dimension $i$ that are homeomorphic,
in both cases without any assumption about orientations.
\end{defn}
Again the direct sums of the $\D_{2i}$, respectively of the $\HH_{2i}$, define an ideal $\D_{\star}$, respectively $\HH_{\star}$,
in the ring $\Omega^U_{\star}\otimes\Q$.
We shall prove:
\begin{thm}\label{t:Dc}
There is a sequence of ring generators $\beta_1, \beta_2, \beta_3, \ldots$ for the rational complex cobordism
ring $\Omega^U_{\star}\otimes\Q$ with the properties $\beta_1\cdot\beta_2\in\D_6$, $\beta_2\cdot\beta_2\in\D_8$ 
and $\beta_i\in\D_{2i}$ for all $i\geq 3$. 
\end{thm}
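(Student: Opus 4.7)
\emph{Proof plan.}
The plan is to start from the generators $\{\beta_i\}_{i\geq 1}$ produced by Theorem~\ref{t:DOc}, keep them unchanged for $i=1$, $i=2$, and for every odd $i\geq 3$, and to replace $\beta_i$ for each even $i\geq 4$ by a new indecomposable generator that lies in $\D_{2i}$. Under this choice the statements $\beta_1\beta_2\in\D_6$ and $\beta_i\in\D_{2i}$ for odd $i\geq 3$ are inherited for free from the inclusion $\DO_{\star}\subset\D_{\star}$. The key observation that enables the extension to even $i$ is that dropping the orientation-preservation condition removes the parity obstruction in Theorem~\ref{t:DOc}: there one needed $\C P^{i-2}$ to carry an orientation-reversing self-diffeomorphism (forcing $i$ odd), but now one may simply use the identity on the fiber and let the orientation reversal come from the base.

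Concretely, for each even $i\geq 4$ take the orientation-reversingly homeomorphic simply connected algebraic surfaces $\Sigma$, $\Sigma'$ of \thmref{t:MA}, and choose holomorphic vector bundles $E\to\Sigma$ and $E'\to\Sigma'$ of rank $i-1$ whose topological types correspond under the homeomorphism $f\colon\Sigma\to\Sigma'$. Since topological complex vector bundles on simply connected $4$-manifolds are classified by their Chern classes, $f$ lifts to a topological bundle isomorphism between the total spaces $X=\PP(E)$ and $X'=\PP(E')$. Both $X$ and $X'$ are closed simply connected smooth manifolds of real dimension $2i\geq 8$, so classical high-dimensional smoothing theory upgrades the bundle homeomorphism to a diffeomorphism; consequently $[X]-[X']\in\D_{2i}$. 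A standard computation of the Chern numbers of a projective bundle expresses $[X]-[X']$ as a polynomial in the Chern numbers of $\Sigma,\Sigma'$ and the Chern classes of $E$. Since $c_1^2(\Sigma)-c_1^2(\Sigma')\neq 0$, one can tune the top Chern class of $E$ so that the Milnor primitive $s_i([X]-[X'])$ is nonzero, making $[X]-[X']$ indecomposable in $\Omega^U_{2i}\otimes\Q$ and available as the new $\beta_i$. The claim $\beta_2^2\in\D_8$ is handled analogously using the products $\Sigma\times\C P^2$ and $\Sigma'\times\C P^2$: they are homeomorphic via $f\times\mathrm{id}$ and, after smoothing, diffeomorphic, with cobordism difference $([\Sigma]-[\Sigma'])\cdot[\C P^2]$; after normalising $\beta_2$ so that $[\Sigma]-[\Sigma']$ is a nonzero multiple of $\beta_2$, this difference is a nonzero multiple of $\beta_2^2$ modulo the class $\beta_1^2\beta_2$, which already lies in $\D_8$ by the ideal property applied to $\beta_1\beta_2\in\D_6$.

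The main obstacle is the smoothing step. The homeomorphism $\Sigma\to\Sigma'$ is itself not smoothable — the two smooth structures are distinguished by Seiberg--Witten invariants, as recorded in the discussion preceding \thmref{t:MA} — so any argument promoting the bundle or product homeomorphism to a diffeomorphism must genuinely exploit the extra room provided by the positive-dimensional fiber or extra factor, and cannot be a purely formal consequence of the existence of smoothings in high dimensions. This is precisely why Theorem~\ref{t:D} admits an exception in complex dimension two: with no fiber or additional factor available, the smoothing step fails and $c_1^2$ survives as a diffeomorphism invariant.
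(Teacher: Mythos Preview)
The proposal has a genuine gap at exactly the point you flag as ``the main obstacle.'' You assert that ``classical high-dimensional smoothing theory upgrades the bundle homeomorphism to a diffeomorphism,'' but this is not a theorem: closed simply connected smooth manifolds of dimension $\geq 5$ that are homeomorphic need not be diffeomorphic (exotic spheres already show this). The homeomorphism $f\colon\Sigma\to\Sigma'$ is not smoothable on the base, so the induced map on $\PP(E)$ or on $\Sigma\times\C P^2$ is a homeomorphism that is smooth only in the fibre direction; nothing you cite promotes it to a diffeomorphism. You identify the difficulty correctly but do not resolve it.

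The paper bypasses smoothing entirely by a different and much simpler mechanism. By Wall's theorem, the simply connected $4$-manifolds $X$ and $\bar Y$ are smoothly $h$-cobordant; let $W$ be such an $h$-cobordism. Then $\C P^1\times W$, $X\times W$, $\bar Y\times W$, and the projectivisation $\PP(\EE_{k-2})$ of the unique extension of $E_{k-2}$ over $W$ are smooth $h$-cobordisms of dimension $\geq 6$ between the relevant pairs, and Smale's $h$-cobordism theorem gives the diffeomorphisms directly. In particular this handles all $k\geq 3$ uniformly, so your case split into odd and even $k$ is unnecessary; for $\beta_2^2$ the paper simply chains $X\times X$, $X\times\bar Y$, $\bar Y\times\bar Y$ via $h$-cobordisms, avoiding the algebraic detour through $[\C P^2]$. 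Note also that your plan to quote Theorem~\ref{t:DOc} as input is circular in the paper's logical order: its proof explicitly invokes the diffeomorphisms established in the proof of Theorem~\ref{t:Dc}.
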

\begin{thm}\label{t:Hc}
There is a sequence of ring generators $\beta_1, \beta_2, \beta_3, \ldots$ for the rational complex cobordism
ring $\Omega^U_{\star}\otimes\Q$ with the property $\beta_i\in\HH_{2i}$ for all $i\geq 2$. 
\end{thm}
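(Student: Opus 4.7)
The plan is to combine Theorem~\ref{t:Dc} with Theorem~\ref{t:MA} from~\cite{MAorient}. Since every diffeomorphism is in particular a homeomorphism, there is an obvious containment $\D_{2i}\subseteq\HH_{2i}$ for every $i$. Hence the generators $\beta_1,\beta_3,\beta_4,\ldots$ produced by Theorem~\ref{t:Dc} already satisfy $\beta_i\in\D_{2i}\subseteq\HH_{2i}$ for every $i\geq 3$. The only case in which Theorem~\ref{t:Hc} claims strictly more than Theorem~\ref{t:Dc} is $i=2$, and this reflects precisely the phenomenon that in complex dimension two the Chern numbers $c_1^2$ and $c_2$ are smooth invariants (by Seiberg--Witten theory, see~\cite{BLMSorient}) yet fail to be topological invariants.

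To handle this remaining case I would invoke Theorem~\ref{t:MA} to produce a pair of simply connected algebraic surfaces $S_1$ and $S_2$ that are homeomorphic but have $c_1^2[S_1]\neq c_1^2[S_2]$. Because the top Chern number $c_2$ is the Euler characteristic and hence a homotopy invariant, the difference $\ga:=[S_1]-[S_2]\in\HH_4$ satisfies $c_2[\ga]=0$ and $c_1^2[\ga]\neq 0$. Taking $\beta_1=[\C P^1]$, the class $\beta_1^2=[\C P^1\times\C P^1]$ has $c_2=4\neq 0$, so $\ga$ is linearly independent of $\beta_1^2$ in the two-dimensional vector space $\Omega^U_4\otimes\Q$. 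Equivalently, a suitable nonzero rational multiple of $\ga$ has nonzero image in the indecomposable quotient $\Omega^U_4\otimes\Q\,/\,(\beta_1^2)$, and therefore qualifies as a polynomial generator in degree four lying in $\HH_4$. I would then redefine $\beta_2$ to be this element.

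Finally I would check that this substitution does not damage the generating property of the remaining $\beta_i$. The new $\beta_2$ differs from the old one by a scalar multiple of the decomposable class $\beta_1^2$, so the substitution is a polynomial automorphism of $\Omega^U_\star\otimes\Q$ fixing every $\beta_j$ with $j\neq 2$; in particular the ideal of decomposable classes is unchanged in every degree, and the generators $\beta_i$ with $i\geq 3$ supplied by Theorem~\ref{t:Dc} remain polynomial generators relative to the modified sequence. The real obstacle throughout is externalized into Theorem~\ref{t:MA}, whose proof in~\cite{MAorient} rests on delicate surface geography together with Freedman's topological classification of simply connected four-manifolds; once that theorem and Theorem~\ref{t:Dc} are in hand, the internal steps of the reduction above are essentially formal.
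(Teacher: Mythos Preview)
Your proposal is correct and follows essentially the same approach as the paper. The only redundancy is that in the paper the generator $\beta_2$ appearing in Theorem~\ref{t:Dc} is \emph{already} defined as $[X]-[Y]$ for the homeomorphic pair $(X,Y)$ of Theorem~\ref{t:MA}, so $\beta_2\in\HH_4$ holds by construction and your redefinition and compatibility check are unnecessary.
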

Note that, because the Chern numbers of algebraic surfaces are diffeomorphism-invariant~\cite{BLMSorient,Chern}, it is not 
possible to choose the generator $\beta_2$ to be in $\D_4$.

Dimension counts similar to the one above show that Theorem~\ref{t:H} and the case of complex dimension $\geq 3$ in
Theorem~\ref{t:D} follow from  Theorem~\ref{t:Hc} and Theorem~\ref{t:Dc} respectively. 


The proof that certain differences of projective varieties can be taken as generators uses Milnor's characterization 
of ring generators for $\Omega^U_{\star}\otimes\Q$.
Let $M$ be a closed almost complex manifold of real dimension $2k$ with total Chern class
$$
c(TM)=\prod_i (1+x_i) \  .
$$
Then, following Thom~\cite{T}, one defines the number $s_k(M)$ as 
$$
s_k(M)  = \sum_i \langle x_i^{k},[M]\rangle \ . 
$$
By the splitting principle $s_k$ is a linear combination of Chern numbers. Milnor~\cite{M,TM} proved that one can take
the generator $\beta_k$ in degree $2k$ to be $[M]$ if and only if $s_k(M)\neq 0$.

The proof of Theorem~\ref{t:Betti} will also use special sequences of ring generators for $\Omega^U_{\star}\otimes\Q$.
For that proof we will choose generators belonging to families on which certain linear combinations of Chern numbers
are unbounded, although the Betti numbers are fixed.

\section{Some algebraic surfaces with useful properties}

The following theorem is the starting point for the proofs of the first three main theorems of this paper.
\begin{thm}[\cite{MAorient}]\label{t:MA}
There exist pairs $(X,Y)$ of simply connected complex projective surfaces of non-zero signature,
 which are orientation-reversingly homeomorphic with respect to the orientations defined by their
 complex structures.
 
 Moreover, one may take $X$ to be the four-fold blowup $X' \#\blowup \#\blowup\#\blowup\#\blowup$ of 
 some other surface $X'$.
\end{thm}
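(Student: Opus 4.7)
The plan is to combine Freedman's topological classification of simply connected closed smooth $4$-manifolds with input from surface geography. By Freedman's theorem, two such manifolds with odd intersection forms are orientation-preservingly homeomorphic if and only if their intersection forms are isomorphic, since the Kirby--Siebenmann invariant vanishes automatically for smooth manifolds. Because reversing orientation negates the intersection form, producing an orientation-reversing homeomorphism $X \to Y$ amounts to exhibiting an isomorphism $Q_X \cong -Q_Y$. For odd, indefinite, unimodular forms this reduces, by diagonalizability over $\mathbb{Z}$, to the purely numerical conditions $b_2^+(X) = b_2^-(Y)$ and $b_2^-(X) = b_2^+(Y)$.

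The task thus becomes a surface geography problem: find simply connected algebraic surfaces $X$ and $Y$ with odd intersection form, non-zero signature, and with pairs $(b_2^+, b_2^-)$ that are the reverses of each other. Via the relations $b_2^{\pm} = (c_2 \pm \sigma - 2)/2$ and $\sigma = (c_1^2 - 2c_2)/3$, this is a constraint on the Chern invariants $(c_1^2, c_2)$: one needs one point realized with negative signature by a simply connected algebraic surface $Y$ and another realized with positive signature by a simply connected $X$, such that the Betti numbers swap. The ``moreover'' clause that $X$ can be written as a four-fold blowup of some other surface $X'$ is accommodated by searching with four units of slack in $b_2^-(X)$: since each blowup adds a single $\langle -1 \rangle$ summand to the intersection form, one locates the pair first at the level of $X'$ satisfying $(b_2^+(X), b_2^-(X)-4)$ and then blows up four times to obtain $X$, preserving both simple connectivity and the odd parity of the form.

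The main obstacle is the geographic construction itself: producing simply connected algebraic surfaces with prescribed Chern invariants, odd intersection forms, and non-zero signature in both the positive and negative ranges of $\sigma$. Candidate sources include surfaces near the Bogomolov--Miyaoka--Yau line $c_1^2 = 3c_2$, which afford large positive signature; Horikawa surfaces along the Noether line, which afford large negative signature; properly elliptic surfaces with varied multiple-fibre data; and algebraic surgery constructions such as branched double or bidouble covers. The essential input from \cite{MAorient} is that simply connected, non-spin algebraic surfaces populate both halves of the geographic plane densely enough to furnish a matched pair with exactly swapped Betti numbers; this is where deep surface-geography results, coupled with Freedman's four-dimensional topology, do the decisive work.
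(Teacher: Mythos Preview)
Your outline is correct and matches the paper's own sketch: reduce via Freedman's classification to the numerical conditions (same Euler characteristic, opposite non-zero signatures, both simply connected and non-spin, hence odd indefinite intersection forms with swapped $b_2^{\pm}$), and then appeal to surface geography to realize such pairs, with the four-fold blowup handled by the slack in those constructions. The paper is simply more specific about the geography inputs, citing Persson for the negative-signature side and Chen and Moishezon--Teicher for the positive-signature side, rather than your broader list of candidate families.
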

The main theorem of~\cite{MAorient} in fact provides infinitely many such pairs, at least if one does
not insist on the property that $X$ be a four-fold blowup. The proof combines geography results due to Persson~\cite{P}
in the case of negative signature and Chen~\cite{Chen} and Moishezon and Teicher~\cite{MT} in the case of positive
signature to find candidate pairs whose Chern numbers are related by
    \begin{alignat}{1}
    c_2(Y) &=c_2(X) \label{eq:c2} \\
	c_{1}^{2}(Y) &=4c_2(X)-c_{1}^{2}(X) \ . \label{eq:c1}
	\end{alignat}
These equations are equivalent to requiring that $X$ and $Y$ have the same Euler characteristic and have 
opposite signatures. As soon as both surfaces are simply connected and non-spin, they are orientation-reversingly 
homeomorphic by Freedman's classification result for simply connected four-manifolds~\cite{freed}. 
The geography results used are flexible enough to allow one to make $X$ non-minimal and, in fact a four-fold
blowup. In~\cite[Theorem 3.10]{MAorient} such a result was stated for double blowups, and the case of four-fold 
blowups is exactly the same.

For the construction of diffeomorphic projective varieties of higher dimension, whose differences will be 
used as generators of the rational complex cobordism ring, we shall need the following.
\begin{lem}\label{l:bundles}
Let $(X,Y)$ be a pair of algebraic surfaces as in Theorem~\ref{t:MA}. Then for every $n\geq 1$ there exist
holomorphic vector bundles $\E\longrightarrow X$ and $\F\longrightarrow Y$ of rank $n+1$ with trivial first Chern classes
and 
$$
\langle c_2(\E),[X]\rangle = - \langle c_2(\F),[Y]\rangle \neq 0 \ .
$$
\end{lem}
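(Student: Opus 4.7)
The plan is to realize both bundles as stabilizations of rank-two direct sums of dual line bundles. For any line bundle $L$ on a projective surface $S$, the bundle $L \oplus L^{-1} \oplus \OO_S^{n-1}$ is holomorphic of rank $n+1$, has $c_1=0$, and satisfies
\[ \langle c_2(L\oplus L^{-1}\oplus\OO_S^{n-1}),[S]\rangle = -\langle c_1(L)^2,[S]\rangle, \]
since adjoining trivial summands does not change the total Chern class. This reduces the lemma to finding line bundles $\mathcal{L}$ on $X$ and $\mathcal{M}$ on $Y$ such that $\langle c_1(\mathcal{L})^2,[X]\rangle$ and $\langle c_1(\mathcal{M})^2,[Y]\rangle$ are nonzero and opposite in sign.

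To exploit the assumption in \thmref{t:MA} that $X$ is a four-fold blowup, I would take $\mathcal{L}$ to be a $\Z$-linear combination of the four exceptional divisor classes $E_1,\ldots,E_4$ on $X$, which satisfy $E_i^2=-1$ and $E_i\cdot E_j=0$ for $i\neq j$. Then $\mathcal{L} = \OO_X\!\left(\sum_{i=1}^{4} k_i E_i\right)$ has $c_1(\mathcal{L})^2 = -\sum_{i=1}^{4} k_i^2$, so by Lagrange's four-square theorem this can realize any non-positive integer. On $Y$, I would simply fix an ample line bundle $H$, so that $a := \langle c_1(H)^2,[Y]\rangle$ is a positive integer; then the Lagrange step lets me match $-a$ as the self-intersection of $c_1(\mathcal{L})$ on $X$. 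Setting
\[ \E := \mathcal{L}\oplus\mathcal{L}^{-1}\oplus\OO_X^{n-1}, \qquad \F := H\oplus H^{-1}\oplus\OO_Y^{n-1} \]
then gives $\langle c_2(\E),[X]\rangle = a$ and $\langle c_2(\F),[Y]\rangle = -a$, as required.

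The main obstacle is producing algebraic classes of prescribed self-intersection on both surfaces simultaneously. The Picard lattice of $Y$ is not under direct control — in particular there is no reason for it to represent a specific integer — so one cannot freely prescribe a self-intersection on $Y$; conversely, the Picard lattice of $X$ is only guaranteed to contain the four exceptional classes. The role of the four-fold blowup clause in \thmref{t:MA} is precisely to convert this asymmetry into Lagrange's four-square theorem, which provides exactly enough flexibility on $X$ to realize any integer value of $-H^2$ that happens to arise on $Y$. Once this matching is in place, the computation above completes the proof.
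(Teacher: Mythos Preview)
Your proposal is correct and follows essentially the same approach as the paper: both reduce to rank two via $L\oplus L^{-1}$, use the four exceptional divisors on $X$ together with Lagrange's four-square theorem, and stabilize by trivial summands. The only difference is the choice of line bundle on $Y$: the paper uses the canonical bundle $\OO(K_Y)$ and invokes the Miyaoka--Yau inequality to ensure $c_1^2(Y)>0$, whereas your choice of an ample line bundle makes the positivity automatic.
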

\begin{proof}
It is enough to prove the case $n=1$, for one can then stabilize by direct summing with trivial line bundles.

For $n=1$ we take $\F=\OO(K_Y)\oplus\OO(-K_Y)$, where $K_Y$ is a canonical divisor of $Y$. Then $\F$ has trivial first Chern class and 
$$
 \langle c_2(\F),[Y]\rangle = (K_Y)\cdot (-K_Y) = - c_1^2(Y) < 0 \ ,
$$
where the final inequality follows, for example, from~\eqref{eq:c1} and the Miyaoka--Yau inequality
$c_1^2(X)\leq 3c_2(X)$ using $c_2(X)>0$.
To prove the Lemma we now have to find a holomorphic rank two bundle $\E\longrightarrow X$ with trivial first Chern class and with 
$$
\langle c_2(\E),[X]\rangle =- \langle c_2(\F),[Y]\rangle=c_1^2(Y) > 0 \ .
$$
Let $E_i$ be the exceptional divisors in $X$.
Since every positive integer is a sum of four squares, we can find integers $a_i$ such that the divisor 
$$
D = a_1E_1+a_2E_2+a_3E_3+a_4E_4
$$
satisfies 
$$
D^2=-a_1^2-a_2^2-a_3^2-a_4^2 = -c_1^2(Y) \ .
$$ 
Now $\E=\OO(D)\oplus\OO(-D)$ has the desired property.
\end{proof}

We shall take projectivisations of such holomorphic bundles to construct high-dimensional examples. 

\begin{rem}
The holomorphic bundles constructed in the Lemma may seem rather arbitrary. For $n\geq 3$ one can take 
instead the stabilized direct sums of the holomorphic tangent and cotangent bundles 
$\E = TX\oplus T^*X\oplus\OO_X^{\oplus (n-3)}$ and $\F = TY\oplus T^*Y\oplus\OO_Y^{\oplus (n-3)}$.
These bundles are in some sense canonical, and have the nice property that their second Chern number is a universal 
multiple of the signature. Moreover, one can use them without arranging one of the surfaces to be a four-fold blowup.
But then one would still need other examples for $n=1$ and $2$.
\end{rem}

\section{Projective space bundles over algebraic surfaces}

Let $S$ be a smooth complex projective surface, and $\E\longrightarrow S$ a holomorphic vector bundle of rank $n+1\geq 2$.
Then the projectivisation $\PP(\E)$ is a $\C P^n$-bundle over $S$ with an induced complex structure on the total space.
We now calculate the Thom--Milnor number $s_{n+2}(\PP(\E))$ for certain special choices of $\E$. 

The cohomology ring of $\PP (\E)$ is described by the following consequence of the Leray--Hirsch theorem and of 
Grothendieck's definition of Chern classes~\cite{G}:
\begin{prop}\label{p:coho}
The integral cohomology ring of $\PP (\E)$ is generated as a $H^*(S)$-module by a class $y\in H^2(\PP (\E))$ that restricts
to every $\C P^{n}$-fibre as a generator, subject to the relation 
$$
y^{n+1}+c_1(\E)y^{n}+c_2(\E)y^{n-1}=0 \ .
$$ 
\end{prop}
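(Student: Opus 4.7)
The plan is to combine the Leray--Hirsch theorem with Grothendieck's definition of Chern classes, and then to use the low dimension of $S$ to truncate the resulting universal relation.

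First I would take $y\in H^2(\PP(\E))$ to be $c_1(\OO_{\PP(\E)}(1))$, the first Chern class of the tautological quotient line bundle on $\PP(\E)$. Its restriction to any projective-space fibre is the standard hyperplane class of $\C P^n$, hence a generator of $H^2$ of that fibre. Since $H^*(\C P^n;\Z)$ is freely generated as an abelian group by the powers of the hyperplane class, the classes $1,y,\ldots,y^n$ restrict to an additive basis on every fibre, so the Leray--Hirsch theorem applies to the bundle $\pi\colon\PP(\E)\to S$ and yields an isomorphism of $H^*(S)$-modules
$$
H^*(\PP(\E))\;\cong\;\bigoplus_{k=0}^{n} H^*(S)\cdot y^k \ .
$$
In particular, $y^{n+1}$ admits a unique expansion $y^{n+1}=-\sum_{i=1}^{n+1}\pi^*\alpha_i\cdot y^{n+1-i}$ with $\alpha_i\in H^{2i}(S)$.

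The second step is the identification $\alpha_i=c_i(\E)$, which is exactly Grothendieck's inductive definition of Chern classes~\cite{G}. Finally, because $S$ is a smooth complex-algebraic surface of real dimension $4$, one has $H^{2i}(S;\Z)=0$ for every $i\geq 3$; hence $\alpha_3=\cdots=\alpha_{n+1}=0$ and the universal relation collapses to
$$
y^{n+1}+c_1(\E)y^n+c_2(\E)y^{n-1}=0 \ ,
$$
which is what the proposition asserts. There is no real obstacle in this argument: both Leray--Hirsch and Grothendieck's definition are classical, and the only input specific to our setting is the cohomological vanishing $H^{\geq 6}(S)=0$ that kills the higher Chern classes of $\E$ from the relation.
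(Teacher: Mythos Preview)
Your argument is correct and is exactly what the paper intends: the proposition is stated there without proof as a direct consequence of the Leray--Hirsch theorem and Grothendieck's definition of Chern classes, together with the obvious vanishing of $c_i(\E)$ for $i\geq 3$ since $S$ is a surface. There is nothing to add.
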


The next result provides the only Chern number calculation required for the proofs of our main theorems.
\begin{prop}\label{p:sn}
Let $\E\longrightarrow S$ be a holomorphic vector bundle of rank $n+1$ with $c_1(\E)=0$ and $\langle c_2(\E),[S]\rangle =c \ $. 
Then $s_{n+2}(\PP(\E)) = -(n+1)(n+3)\cdot c\ $.
\end{prop}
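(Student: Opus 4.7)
The plan is to exploit additivity of the Thom--Milnor number $s_k$ on short exact sequences, together with the standard sequences describing the tangent bundle of the projective bundle $\pi\colon\PP(\E)\to S$, and to finish by pushing forward to $S$.

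First I would split off the horizontal tangent directions via the sequence $0\to T_{\PP(\E)/S}\to T\PP(\E)\to\pi^*TS\to 0$. The summand $s_{n+2}(\pi^*TS)$ vanishes for free: its Chern roots are pulled back from $S$, and any symmetric polynomial of degree $n+2\geq 3$ in two variables lives in $H^{2(n+2)}(S)=0$. The relative Euler sequence $0\to\OO\to\pi^*\E\otimes\OO_{\PP(\E)}(1)\to T_{\PP(\E)/S}\to 0$ then reduces the problem to computing $s_{n+2}(\pi^*\E\otimes\OO(1))$. Writing $y=c_1(\OO(1))$ and treating the Chern roots $y_i$ of $\pi^*\E$ formally, the Chern roots of the twisted bundle are $y+y_i$, and a binomial expansion gives
$$s_{n+2}(\pi^*\E\otimes\OO(1))=\sum_{k=0}^{n+2}\binom{n+2}{k}\,y^{n+2-k}\,\pi^* s_k(\E).$$

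The base $S$ having complex dimension two, together with the hypothesis $c_1(\E)=0$, kills almost every term in this sum. Dimensionally $s_k(\E)\in H^{2k}(S)=0$ for $k\geq 3$, while Newton's identities with $c_1=0$ leave only $s_0(\E)=n+1$ and $s_2(\E)=-2c_2(\E)$. Thus only the $k=0$ and $k=2$ contributions survive.

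To finish I would push forward along $\pi$. With $c_1(\E)=0$ and higher Chern classes vanishing for dimensional reasons, the Leray--Hirsch relation of Proposition \ref{p:coho} reduces to $y^{n+1}=-\pi^*c_2(\E)\cdot y^{n-1}$, which combined with the standard formula $\pi_*(y^n)=1$ yields $\pi_*(y^{n+2})=-c_2(\E)$. Adding the two surviving terms and evaluating on $[S]$ then collapses to the asserted value $-(n+1)(n+3)\,c$. Conceptually there is no obstacle: the argument is forced by the two exact sequences, additivity of $s_k$, and the very low cohomological dimension of $S$. The only place where real care is needed is the sign bookkeeping in Newton's identities and in the tautological relation; once those are pinned down the rest is routine binomial arithmetic.
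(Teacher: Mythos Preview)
Your proof is correct and follows essentially the same route as the paper's: both identify the Chern roots of $T\PP(\E)$ via the tangent--fibre sequence and the relative Euler sequence, expand $\sum_j(y+y_j)^{n+2}$ binomially, use $c_1(\E)=0$ and $s_2(\E)=-2c_2(\E)$ to reduce to the $k=0$ and $k=2$ terms, and finish with the tautological relation $y^{n+2}=-c_2(\E)\,y^n$. The only cosmetic difference is that you package the last step as a pushforward $\pi_*$ to $S$ while the paper evaluates directly on the fibre class $y^n$.
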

\begin{proof}
Let
\begin{alignat*}{1}
c(S) &=(1+x_1)(1+x_2) \\
c(\E) &=(1+y_1)\cdot\ldots\cdot (1+y_{n+1}) 
\end{alignat*}
be formal factorizations of the total Chern classes in the sense of the splitting principle.

Since the projection $\pi\colon\PP(\E)\longrightarrow S$ is holomorphic, we can use the Whitney sum formula to conclude 
$$
c(\PP(\E)) = c(T\pi) \cdot \pi^*c(S) \ ,
$$
where $T\pi$ is tangent bundle along the fibers of $\pi$. In what follows we shall suppress the cohomological pullback $\pi^*$ in 
the notation.

To compute $c(T\pi)$ we use the tautological exact sequence
$$
1\longrightarrow \LL^{-1}\longrightarrow \pi^*\E\longrightarrow \LL^{-1}\otimes T\pi \longrightarrow 1\ ,
$$
where $\LL$ is the fiberwise hyperplane bundle on the total space. Tensoring with $\LL$, we conclude 
$$
c(T\pi) =c(\LL\otimes \pi^*\E) = (1+y+y_1)\cdot\ldots\cdot (1+y+y_{n+1}) \ ,
$$
where the last equality comes from $c_1(\LL)=y$. Thus
$$
c(\PP(\E)) = (1+y+y_1)\cdot\ldots\cdot (1+y+y_{n+1})(1+x_1)(1+x_2)  \ .
$$
Now to calculate $s_{n+2}(\PP(\E))$ we have to evaluate the following expression in the Chern roots on the fundamental class of $\PP(\E)$:
$$
(y_1+y)^{n+2}+\ldots+(y_{n+1}+y)^{n+2}+x_1^{n+2}+x_2^{n+2} \ .
$$
Since $n+2\geq 3$ and symmetric functions of the $x_i$, respectively of the $y_j$, vanish in degrees $\geq 3$,
this expression equals
$$
(n+1)y^{n+2}+(n+2)y^{n+1}\sum_jy_j+{n+2 \choose 2}y^{n}\sum_jy_j^2  \ . 
$$
In the first summand we may, by Proposition~\ref{p:coho}, substitute $y^{n+2}=-c_2(\E)y^n$.
The second summand vanishes since $\sum_jy_j=c_1(\E)=0$. Finally, in the last summand we use
$$
\sum_jy_j^2 = (c_1^2-2c_2)(\E)=-2c_2(\E) \ .
$$
This gives us 
$$
(n+1)y^{n+2}+(n+2)y^{n+1}\sum_jy_j+{n+2 \choose 2}y^{n}\sum_jy_j^2  = -(n+1)(n+3)c_2(\E)y^n \ .
$$
Now the conclusion follows since $y^n$ evaluates as $+1$ on the fibre of $\pi$.
\end{proof}

\begin{rem}
The results of this section are purely
topological, and hold in greater generality, assuming only that $S$ is an almost-complex 
four-manifold and $E$ is some complex vector bundle.
\end{rem}

\section{Proofs of the main theorems}

In this section we first prove Theorems~\ref{t:DOc}, \ref{t:Dc} and~\ref{t:Hc}, which in turn imply Theorems~\ref{t:DO}, \ref{t:D} and~\ref{t:H} 
stated in the introduction. For the proofs we fix a pair of complex projective surfaces $X$ and $Y$ as in Theorem~\ref{t:MA}. They are 
simply connected with non-zero signature, and they are orientation-reversingly homeomorphic.
For every $n\geq 1$ we consider holomorphic bundles $\E_n\longrightarrow X$ and $\F_n\longrightarrow Y$ of rank $n+1$ 
with trivial first Chern classes and with opposite second Chern numbers, compare Lemma~\ref{l:bundles}.

For $k\geq 3$ let $X_k=\PP (\E_{k-2})$ and $Y_k=\PP (\F_{k-2})$. Since $X$ and $Y$ are projective-algebraic the 
holomorphic bundles $\E_{k-2}$ and $\F_{k-2}$ and their projectivisations $X_k$ and $Y_k$ are algebraic as well by 
Serre's GAGA principle~\cite{GAGA}.

We now define the generators for the rational complex cobordism ring that we shall use for the proofs of the first three main theorems.
\begin{defn}
Let $\beta_1=[\C P^1]$, $\beta_2=[X]-[Y]$ and $\beta_k=[X_k]-[Y_k]$ for $k\geq 3$.
\end{defn}
First of all, these are indeed generators.
\begin{prop}
The elements $\beta_1=[\C P^1]$, $\beta_2=[X]-[Y]$ and $\beta_k=[X_k]-[Y_k]$ for $k\geq 3$ form ring generators for $\Omega^U_{\star}\otimes\Q$.
\end{prop}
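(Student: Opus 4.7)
The plan is to verify the generation claim via Milnor's criterion: the elements $\beta_1,\beta_2,\ldots$ form ring generators for $\Omega^U_\star\otimes\Q$ if and only if, for each $k$, the Thom--Milnor number $s_k(\beta_k)$ is non-zero. Since $s_k$ is a linear combination of Chern numbers, it extends $\Q$-linearly to $\Omega^U_{2k}\otimes\Q$, so the criterion applies equally well to the virtual classes $[X]-[Y]$ and $[X_k]-[Y_k]$. The task therefore reduces to three nonvanishing computations.

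For $k=1$ the class $\beta_1=[\C P^1]$ satisfies $s_1(\C P^1)=c_1(\C P^1)[\C P^1]=2\neq 0$, which is classical. For $k=2$ I would use the fact that $s_2=c_1^2-2c_2=3\sigma$ on a complex surface, so
$$
s_2(\beta_2)=s_2(X)-s_2(Y)=3\bigl(\sigma(X)-\sigma(Y)\bigr).
$$
By Theorem~\ref{t:MA} the surfaces $X$ and $Y$ are orientation-reversingly homeomorphic with non-zero signature, hence $\sigma(Y)=-\sigma(X)\neq 0$, giving $s_2(\beta_2)=6\sigma(X)\neq 0$.

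For $k\geq 3$ I would apply Proposition~\ref{p:sn} to $X_k=\PP(\E_{k-2})$ and $Y_k=\PP(\F_{k-2})$. Writing $n=k-2$ and letting $c=\langle c_2(\E_{n}),[X]\rangle$, that proposition gives $s_k(X_k)=-(n+1)(n+3)c$ and likewise $s_k(Y_k)=-(n+1)(n+3)c'$ with $c'=\langle c_2(\F_{n}),[Y]\rangle$. Lemma~\ref{l:bundles} ensures $c'=-c\neq 0$, so
$$
s_k(\beta_k)=s_k(X_k)-s_k(Y_k)=-2(n+1)(n+3)c\neq 0.
$$
With all the $s_k(\beta_k)$ non-zero, Milnor's characterization yields that $\beta_1,\beta_2,\beta_3,\ldots$ generate $\Omega^U_{\star}\otimes\Q$ as a polynomial ring.

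The only potentially delicate point is confirming that Milnor's criterion is really available in this virtual setting; but since the $s_k$ are $\Q$-linear functionals on $\Omega^U_{2k}\otimes\Q$ and the criterion is a statement about the map $\Omega^U_{\star}\otimes\Q\to\Q[s_1,s_2,\ldots]$ sending the $k$-th graded piece to $\Q$, no extra work is needed beyond the three evaluations above. The computations themselves are routine once Theorem~\ref{t:MA}, Lemma~\ref{l:bundles} and Proposition~\ref{p:sn} are in place.
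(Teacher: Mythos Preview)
Your proof is correct and follows essentially the same route as the paper's: both verify Milnor's criterion $s_k(\beta_k)\neq 0$ directly, using $s_2=c_1^2-2c_2=3\sigma$ together with Theorem~\ref{t:MA} for $k=2$, and Proposition~\ref{p:sn} together with Lemma~\ref{l:bundles} for $k\geq 3$. Your explicit remark that $s_k$ extends linearly to virtual classes is a nice touch the paper leaves implicit, but otherwise the arguments coincide.
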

\begin{proof}
By the result of Milnor~\cite{M,TM} mentioned earlier we only have to check that $s_k(\beta_k)\neq 0$ for all $k$. Clearly there is nothing to 
prove for $k=1$.

For $k=2$ we have 
$$
s_2(\beta_2)=s_2(X)-s_2(Y)=(c_1^2-2c_2)(X)-(c_1^2-2c_2)(Y)=3 (\sigma(X)-\sigma(Y))=6\sigma (X)\neq 0 \ ,
$$
where $\sigma$ denotes the signature. We have used the fact that $X$ and $Y$ have opposite signature since they are orientation-reversingly
homeomorphic, and that these signatures are non-zero.

For $k\geq 3$ Proposition~\ref{p:sn} and Lemma~\ref{l:bundles} give
\begin{alignat*}{1}
s_k(\beta_k)=s_k(X_k)-s_k(Y_k) &=-(k-1)(k+1)(\langle c_2(\E),[X]\rangle -\langle c_2(\F),[Y]\rangle )\\
&= -2(k^2-1)\langle c_2(\E),[X]\rangle\neq 0 \ .
\end{alignat*}
This completes the proof.
\end{proof}

Now we can prove the first three main theorems.

\begin{proof}[Proof of Theorem~\ref{t:Dc}]
Denote by $\bar Y$ the smooth manifold underlying $Y$ equipped with the orientation that is opposite to the 
one defined by the complex structure. By a result of Wall~\cite{W}, the four-manifolds $X$ and $\bar Y$ are smoothly 
$h$-cobordant. Let $W$ be any $h$-cobordism between them. 

The product $\C P^1\times W$ is a $7$-dimensional $h$-cobordism between $\C P^1 \times X$ and $\C P^1\times \bar Y$. 
By Smale's $h$-cobordism theorem~\cite{Smale} these two manifolds are diffeomorphic. This shows that $\beta_1\cdot\beta_2\in\D_{6}$.

Similarly $X\times W$ is an $h$-cobordism between $X\times X$ and $X\times\bar Y$, and $\bar Y\times W$ is an $h$-cobordism
between $\bar Y\times X$ and $\bar Y\times \bar Y$. Therefore, by the $h$-cobordism theorem, $X\times X$, $X\times \bar Y=\bar Y\times X$ and $\bar Y\times\bar Y$ 
are all diffeomorphic. Thus $\beta_2\cdot\beta_2\in\D_8$.

As the inclusion of $X$ into $W$ is a homotopy equivalence, the complex vector bundle $E_{k-2}$ underlying the holomorphic 
bundle $\E_{k-2}$ has a unique extension $\EE_{k-2}\longrightarrow W$. The restriction of $\EE_{k-2}$ to $\bar Y$ is isomorphic to the 
complex vector bundle $F_{k-2}$ underlying $\F_{k-2}$, if we think of $F_{k-2}$ as a bundle over $\bar Y$ rather than over $Y$. 
This follows from the conditions imposed on $\E_{k-2}$ and $\F_{k-2}$ in Lemma~\ref{l:bundles}, since they imply that the two
complex vector bundles have the same Chern classes. A direct obstruction theory argument then shows that they are isomorphic,
a conclusion that can also be reached by appealing to the work of Peterson~\cite{Pet}.
Now the projectivisation $\PP (\EE_{k-2})$ is an $h$-cobordism between $X_k$ and $\bar Y_k$. Applying Smale's $h$-cobordism 
theorem~\cite{Smale} again, we conclude $\beta_k\in\D_{2k}$ for $k\geq 3$.
\end{proof}

\begin{proof}[Proof of Theorem~\ref{t:Hc}]
We have $\beta_2\in\HH_4$ by the assumption that $X$ and $Y$ are homeomorphic, and $\beta_k\in\D_{2k}\subset\HH_{2k}$ for $k\geq 3$
by the proof of Theorem~\ref{t:Dc}. This is all we had to do to prove Theorem~\ref{t:Hc}.
\end{proof}

\begin{proof}[Proof of Theorem~\ref{t:DOc}]
The only thing left to discuss is orientations.

In the proof of Theorem~\ref{t:Dc} we have shown that $X_k$ and $Y_k$ are orientation-reversingly diffeomorphic. 
If the complex dimension $k$ is odd, then the complex structure on $Y_k$ that is the complex conjugate of the given 
one induces the opposite orientation $\bar Y_k$. This conjugate structure has the same Chern numbers and is also 
projective algebraic. Thus, if we complex conjugate the complex structure
of $Y_k$, but not the one on $X_k$, then the modified $\beta_k$ has the property $\beta_k\in\DO_{2k}$ for $k$ odd.

We showed in the proof of Theorem~\ref{t:Dc} that $\C P^1 \times X$ and $\C P^1\times \bar Y$ are orientation-preservingly
diffeomorphic. As $\C P^1$ admits orientation-reversing self-diffeomorphisms, it follows that $\C P^1 \times X$ and $\C P^1\times Y$
are also orientation-preservingly diffeomorphic. This means that $\beta_1\cdot\beta_2\in\DO_6$. The same argument shows that 
$\beta_1\cdot\beta_{k-1}\in\DO_{2k}$ for all $k\geq 3$.
\end{proof}

It remains to prove Theorem~\ref{t:Betti} from the introduction. For this we shall use the following notation.
The Hirzebruch $\chi_y$-genus combines all the $\chi_p$ into the polynomial
$$
\chi_y = \sum_{p=0}^n \chi_p y^p \ .
$$
The compatibility of the Hodge and K\"unneth decompositions for the cohomology of a product shows that $\chi_y$ defines a ring homomorphism
$$
\chi_y\colon\Omega^U_{\star}\otimes\Q\longrightarrow\Q [y] \ ,
$$
whose kernel is an ideal $\I_{\star}\subset \Omega^U_{\star}\otimes\Q$.

\begin{proof}[Proof of Theorem~\ref{t:Betti}]
In complex dimension $1$ there is nothing to prove. In dimension $2$ both Chern numbers are linear combinations of 
$\chi_0=\frac{1}{12}(c_1^2+c_2)$ and $\chi_1=\frac{1}{6}(c_1^2-5c_2)$, so again there is nothing to prove.
For the rest of the proof we work in dimension $i\geq 3$. 
In these dimensions we have to prove that a linear combination of Chern numbers that is not a linear combination of the $\chi_p$
is unbounded on some complex projective varieties with bounded Betti numbers. For this we consider the rational complex 
cobordism ring $\Omega^U_{\star}\otimes\Q$ and choose a convenient
generating sequence $\gamma_i$ by setting $\gamma_1=[\C P^1]$, $\gamma_2 =[\C P^2]$ and $\gamma_i =[\PP(\E_c)]$ for 
$i\geq 3$, where $\E_c\longrightarrow A$ is a holomorphic bundle of rank $i-1$ over an Abelian surface $A$ with
$c_1(\E_c)=0$ and with second Chern number $c\neq 0$. Proposition~\ref{p:sn} shows that this is indeed a sequence 
of ring generators.

For definiteness we may take $\E_c = \HH\oplus\HH^{-1}\oplus\OO_A^{\oplus (i-3)}$,
where $\HH$ is an ample line bundle. Then 
$$
c = (c_1(\HH))\cdot (-c_1(\HH)) = - c_1^2(\HH) < 0 \ .
$$

The  subspace $\I_{2i}\subset \Omega^U_{2i}\otimes\Q$ is the intersection of the kernels of all the $\chi_p$ and is therefore of 
codimension $[(i+2)/2]$. A $\Q$-vector space basis for $\Omega^U_{2i}\otimes\Q$ is given by the elements 
$\gamma_I = \gamma_{i_1}\cdot\ldots\cdot\gamma_{i_l}$, where $I=(i_1,\ldots,i_l)$ ranges over all partitions of $i$. 
Among these basis vectors there are $[(i+2)/2]$ many corresponding to partitions with all $i_j\leq 2$, and these are clearly not 
contained in $\I_{2i}$. However, all the other basis vectors are in the subspace $\I_{2i}$. To check this it is enough to check that 
each $[\PP (\E_c)]$ is in $\I_{2i}$. This follows directly from looking at the Hodge decomposition
of the cohomology of $\PP (\E_c)$. This cohomology is given by Proposition~\ref{p:coho}, with the class $y$ being of type $(1,1)$.
Here we use the assumption that the base of $\E_c$ is an Abelian surface $A$, with $\chi_p(A)=0$ for all $p$.

It follows by counting dimensions that the $\gamma_I$ corresponding to partitions $I$ containing an $i_j\geq 3$ form a vector space 
basis of $\I_{2i}$.

Let $f\colon \Omega^U_{2i}\otimes\Q\longrightarrow\Q$ be any linear combination of Chern numbers. If $f$ is not a linear combination of 
the $\chi_p$, then $\ker (f)\cap \I_{2i}$ is a proper subspace of $\I_{2i}$. It follows that at least one of the $\gamma_I$ with $I$ containing 
an $i_j\geq 3$ is not in $\ker (f)$, i.e. $f(\gamma_I)\neq 0$ for this particular $I$. The projective variety representing $\gamma_I$ fibers
holomorphically over the Abelian surface $A$. Pulling back under a finite covering $A'\longrightarrow A$ of degree $d$ we obtain
a complex projective variety on which $f$ evaluates as $d\cdot f(\gamma_I)$, which is unbounded as we increase $d$. However,
the Betti numbers of these coverings are bounded. This completes the proof.
\end{proof}

Proposition~\ref{p:sn} shows that the number $s_i$ is unbounded on complex projective varieties of dimension $i$ with
fixed Betti numbers. As another concrete instance for the unboundedness of Chern numbers we prove:
\begin{prop}\label{p:c1i}
In every complex dimension $i\geq 3$ there are sequences of complex projective varieties with bounded Betti numbers on which
the Chern number $c_1^i$ is unbounded.
\end{prop}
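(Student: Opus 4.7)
The plan is to follow the strategy used in the proof of Theorem~\ref{t:Betti}, but this time checking explicitly that the specific Chern number $c_1^i$ evaluates non-trivially on the generator $\gamma_i = [\PP(\E_c)]$ appearing there, and then producing the desired unbounded sequences by pulling back along finite coverings of the Abelian base. For each $i\geq 3$ I take $X_i = \PP(\E_c)$ with $\E_c \longrightarrow A$ as in the proof of Theorem~\ref{t:Betti}, so that $c_1(\E_c)=0$ and $\langle c_2(\E_c),[A]\rangle = c \neq 0$.

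The key computation is that $c_1^i(X_i) \neq 0$. The tautological sequence used in the proof of Proposition~\ref{p:sn} gives $c_1(T\pi) = (i-1)y + c_1(\E_c) = (i-1)y$, and since $c_1(A)=0$ the Whitney sum formula yields $c_1(X_i) = (i-1)y$. Applying the Grothendieck relation $y^{i-1} + c_2(\E_c) y^{i-3} = 0$ from Proposition~\ref{p:coho} (with $n = i-2$) gives $y^i = -c_2(\E_c) y^{i-2}$, and since $y^{i-2}$ evaluates as $+1$ on each $\C P^{i-2}$-fibre, $\langle y^i,[X_i]\rangle = -c$. Hence $c_1^i(X_i) = -(i-1)^i c \neq 0$.

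To obtain sequences of varieties with bounded Betti numbers and unbounded $c_1^i$, I pull back via finite coverings $\tau\colon A'\longrightarrow A$ of degree $d$. Since any finite covering of an Abelian surface is again an Abelian surface, $A'$ has the same Betti numbers as $A$. The projectivisation $X_i^{(d)} := \PP(\tau^*\E_c)$ is projective-algebraic by Serre's GAGA principle, and by Proposition~\ref{p:coho} its rational cohomology is $H^*(A') \otimes \Q[y]/(y^{i-1} + c_2(\tau^*\E_c) y^{i-3})$, whose Betti numbers do not depend on $d$. Since $c_1(\tau^*\E_c)=0$ and $\langle c_2(\tau^*\E_c),[A']\rangle = dc$, the identical computation yields $c_1^i(X_i^{(d)}) = -(i-1)^i dc$, which is unbounded as $d\to\infty$. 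There is no substantial obstacle: the argument is essentially a specialization of the one proving Theorem~\ref{t:Betti}, made concrete for the particular Chern number $c_1^i$.
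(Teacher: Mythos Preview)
Your proof is correct and follows essentially the same approach as the paper: you compute $c_1(\PP(\E_c))=(i-1)y$ using $c_1(\E_c)=c_1(A)=0$, apply the Grothendieck relation to obtain $c_1^i(\PP(\E_c))=-(i-1)^ic\neq 0$, and then pull back along finite coverings of the Abelian base to produce the unbounded sequence with bounded Betti numbers. The only difference is that you spell out a few details (such as why the Betti numbers of the pulled-back bundles are constant) that the paper leaves implicit.
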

\begin{proof}
We consider the same $\PP (\E_c)$ with $c\neq 0$ as in the proof of Theorem~\ref{t:Betti}. The formula for the total Chern class from 
the proof of Proposition~\ref{p:sn} shows that $c_1(\PP (\E_c))=(i-1)y$. Thus the relation $y^i = -c_2(\E_c)y^{i-2}$ from 
Proposition~\ref{p:coho} gives
$$
\langle c_1^i(\PP (\E_c)),[\PP (\E)]\rangle = (i-1)^i \langle y^i,[\PP (\E_c)]\rangle =  -(i-1)^i \langle c_2(\E_c),[A]\rangle = -(i-1)^i\cdot c \neq 0 \ .
$$
Pulling back to finite coverings of $A$ shows that $c_1^i$ is unbounded on these coverings, although their Betti numbers are bounded.
\end{proof}

\begin{rem}
I proved in~\cite[Proposition~2]{Chern} that in odd complex dimensions $>1$ the Hirzebruch--Riemann--Roch
formulae for $\chi_p$ do not involve $c_1^i$. This shows of course that $c_1^i$ is not contained in the span of the 
$\chi_p$. However, in even complex dimensions the Todd genus expressing $\chi_0$ does contain $c_1^i$, so that,
without Proposition~\ref{p:c1i} and Theorem~\ref{t:Betti}, one would not know that $c_1^i$ is not contained in the span of the $\chi_p$.
\end{rem}

\bibliographystyle{amsplain}

\bigskip

\end{document}